\documentclass[10pt]{amsart} 
\usepackage[foot]{amsaddr}
\usepackage{amsmath,amssymb,bm, color} 

 \usepackage{float}
 
\usepackage{amsmath}
\usepackage{graphicx}

\usepackage{mathrsfs}
\usepackage{bbm}
\usepackage{bm}
\usepackage{amsfonts,amssymb}
\usepackage{multirow}
\usepackage{lineno}
\usepackage{color}

\numberwithin{equation}{section}
 \newtheorem{theorem}{Theorem}[section]
 \newtheorem{lemma}[theorem]{Lemma}

\def\3bar{{|\hspace{-.02in}|\hspace{-.02in}|}}

\def\bf{{\mathbf{f}}}

\def\bn{{\mathbf{n}}}

\def\beta{{\boldsymbol{\eta}}}

\newtheorem{algorithm}{Least Squares Weak Galerkin Algorithm}[section]

\numberwithin{equation}{section}

\def\3bar{{|\hspace{-.02in}|\hspace{-.02in}|}}

\def\a#1{\begin{align*}#1\end{align*}} \def\an#1{\begin{align}#1\end{align}}

\begin{document}

\title [Least-Squares Weak Galerkin]
{A Least-Squares Weak Galerkin Method for the Biharmonic Cauchy Problem}

  \author {Chunmei Wang $\dagger$ }
  \address{Department of Mathematics, University of Florida, Gainesville, FL 32611, USA. }
  \email{chunmei.wang@ufl.edu} 
 
\author {Shangyou Zhang}
\address{Department of Mathematical Sciences,  University of Delaware, Newark, DE 19716, USA}   \email{szhang@udel.edu} 

\thanks{$\dagger$  Corresponding author. }

\begin{abstract} We develop a least-squares weak Galerkin (LS-WG) finite element method for the Cauchy problem of the biharmonic equation. The proposed approach reformulates the fourth-order equation as a coupled system of two second-order equations, which are discretized using discrete weak Laplacian operators on  weak finite element spaces. The resulting least-squares formulation yields a symmetric positive definite linear system, thereby eliminating the discrete inf-sup condition required by mixed finite element methods while avoiding the construction of globally $C^1$-conforming finite element spaces. Furthermore, the weak Galerkin framework naturally accommodates general polygonal meshes, offering considerable flexibility in mesh generation and approximation. Under the assumption that the continuous biharmonic Cauchy problem admits a unique solution, we establish the uniqueness of the discrete LS-WG solution and derive optimal-order error estimates in a discrete energy norm. Numerical experiments confirm the theoretical convergence rates and demonstrate the accuracy, robustness, and effectiveness of the proposed method.
\end{abstract}

\keywords{weak Galerkin, least squares,  finite element methods, weak Laplacian,   Cauchy problem, polygonal meshes, biharmonic equation.}

\subjclass[2010]{65N30, 65N15, 65N12, 65N20}
  
\maketitle

\section{Introduction}

The biharmonic equation arises in a broad range of scientific and engineering applications, including Kirchhoff plate bending, elasticity, fluid mechanics, image processing, and phase-field models. In many practical applications, however, complete boundary information is unavailable due to inaccessible portions of the boundary or limitations in measurement devices. This leads naturally to the Cauchy problem for the biharmonic equation, in which both the function value and its normal derivative are prescribed only on an accessible portion of the boundary, while no information is available on the remaining part. Such problems arise in nondestructive testing, structural health monitoring, inverse boundary reconstruction, and data completion.

Let $\Omega\subset\mathbb{R}^2$ be a bounded convex domain with Lipschitz continuous boundary $\partial\Omega$. Suppose the boundary is partitioned into two disjoint nonempty open subsets $\Gamma_1$ and $\Gamma_2$ satisfying
\[
\partial\Omega=\overline{\Gamma}_1\cup\overline{\Gamma}_2,
\]
where $\Gamma_1$ denotes the accessible boundary and $\Gamma_2$ denotes the inaccessible boundary. We consider the biharmonic Cauchy problem of finding $u$ such that
\begin{equation}\label{model}
\begin{split}
\Delta^2u &=f,\qquad \text{in }\Omega,\\
u&=g_1, \qquad \Delta u =g_3,\qquad \text{on }\Gamma_1,\\
\nabla u\cdot\mathbf n&=g_2,\qquad  \nabla(\Delta u)\cdot\mathbf n =g_4,\qquad \text{on }\Gamma_1,
\end{split}
\end{equation}
where $f$, $g_1$, $g_2$, $g_3$ and $g_4$  are prescribed data, and $\mathbf n$ denotes the outward unit normal vector on $\partial\Omega$. No boundary conditions are imposed on $\Gamma_2$.

Unlike standard biharmonic boundary value problems, problem \eqref{model} is severely ill-posed in the sense of Hadamard. Although uniqueness may be established under suitable assumptions through unique continuation principles, the solution generally fails to depend continuously on the prescribed Cauchy data. Consequently, arbitrarily small perturbations in the boundary measurements may lead to large errors in the reconstructed solution. Compared with second-order elliptic Cauchy problems, the instability is further amplified by the fourth-order differential operator and the need to recover missing boundary information from partial measurements, making both the mathematical analysis and numerical approximation substantially more challenging.

Numerical methods for elliptic Cauchy problems have been extensively studied over the past several decades. Classical approaches include regularization techniques such as the quasi-reversibility method, Tikhonov regularization, and iterative regularization methods
\cite{engl1996regularization,Efendiev2025}, which stabilize the ill-posed problem by solving an associated well-posed or optimization problem. Boundary integral and fundamental solution methods
\cite{chen2006boundary,mcLean2000strongly} have also been successfully developed by reformulating the governing equations as boundary integral equations. While these methods are attractive for problems with relatively simple geometries, they require the accurate evaluation of singular kernels and become less effective for complex computational domains or noisy boundary data.

Finite element methods provide another important class of numerical techniques for Cauchy problems. Stabilized finite element methods, mixed finite element methods, and least-squares finite element methods have been successfully developed for second-order elliptic Cauchy problems
\cite{burman2010stabilized,bochev2005least,bramble1991least, r1, r3}. Among them, least-squares methods are particularly attractive because they naturally yield symmetric positive definite linear systems and avoid the saddle-point structure associated with mixed formulations. However, extending these methodologies to fourth-order Cauchy problems remains highly nontrivial. Direct conforming discretizations require globally $C^1$-continuous finite element spaces, such as the Argyris element, whose construction on general meshes is notoriously difficult. Mixed formulations alleviate this difficulty by introducing auxiliary variables, but typically require carefully designed finite element spaces satisfying discrete inf-sup conditions. These challenges become even more pronounced for biharmonic Cauchy problems due to their intrinsic ill-posedness.

Least-squares finite element methods are particularly well suited for ill-posed Cauchy problems because they minimize the residuals of the governing equations in a natural least-squares norm, resulting in stable approximations and symmetric positive definite linear systems. In contrast to mixed formulations, they avoid saddle-point structures and the associated discrete inf-sup conditions, leading to simpler analysis and implementation. When combined with the weak Galerkin framework, the least-squares formulation further eliminates the need for globally $C^1$
-conforming finite element spaces while naturally supporting general polygonal meshes, making it an attractive approach for fourth-order ill-posed problems.

Recently, weak Galerkin (WG) finite element methods have emerged as a powerful framework for solving high-order partial differential equations by replacing classical differential operators with weakly defined differential operators acting on weak finite element spaces. The WG methodology eliminates the need for globally smooth finite element spaces, naturally accommodates general polygonal meshes, and has been successfully applied to a wide variety of elliptic problems, including biharmonic equations
\cite{wg1,wg2,wg3,wg4,wg5,wg6,wg7,wg8,wg9,wg10,wg11,wg12,wg13,wg14,wg15,wg16,wg17,wg18,wg19,wg20,wg21,wy3655,guan, guan2}. Nevertheless, existing weak Galerkin   methods have been developed almost exclusively for well-posed boundary value problems with complete boundary conditions. To the best of our knowledge, no WG method has been proposed for the biharmonic Cauchy problem. The combination of a fourth-order differential operator, incomplete boundary data, and the intrinsic ill-posedness of the problem presents significant analytical and computational challenges that preclude a straightforward extension of existing WG methodologies. In contrast, the least-squares weak Galerkin formulation developed in this work naturally accommodates partial boundary data while preserving a symmetric positive definite linear system, making it particularly well suited for this class of ill-posed problems.

The objective of this paper is to develop a least-squares weak Galerkin (LS-WG) finite element method for the biharmonic Cauchy problem. Our starting point is the introduction of the auxiliary variable
\[
w=\Delta u,
\]
which reformulates the fourth-order problem into the coupled second-order system
\begin{equation}\label{model2}
\begin{split}
\Delta w &=f,\qquad \text{in }\Omega,\\
\Delta u-w&=0,\qquad \text{in }\Omega,\\
u&=g_1, \quad w=g_3\qquad \text{on }\Gamma_1,\\
\nabla u\cdot\mathbf n&=g_2, \quad  \nabla w\cdot \mathbf n=g_4\qquad \text{on }\Gamma_1.
\end{split}
\end{equation}
This reformulation enables the construction of a least-squares weak Galerkin formulation based on discrete weak Laplacian operators defined on discontinuous finite element spaces. The resulting formulation combines the flexibility of weak Galerkin methods with the stability of least-squares approximations. In particular, it produces a symmetric positive definite linear system, completely eliminating the discrete inf-sup condition required by mixed finite element methods while avoiding the construction of globally $C^1$-conforming finite element spaces. Moreover, the proposed method naturally supports general polygonal meshes, making it suitable for computations on complex geometries.

The main contributions of this paper are summarized as follows.

\begin{itemize}

\item We propose the first least-squares weak Galerkin finite element method for the biharmonic Cauchy problem. By reformulating the fourth-order equation as a coupled system of second-order equations, the proposed approach effectively combines least-squares stabilization with the WG method.

\item The proposed formulation yields a symmetric positive definite linear system without requiring discrete inf-sup conditions or globally $C^1$-conforming finite element spaces, while naturally accommodating general polygonal meshes. Since the resulting linear system is symmetric positive definite, efficient iterative solvers such as the conjugate gradient method together with multigrid preconditioners can be employed.

\item Under the uniqueness assumption for the continuous biharmonic Cauchy problem, we establish the uniqueness of the discrete LS-WG solution and derive optimal-order error estimates in an appropriate discrete energy norm.

\item Numerical experiments verify the theoretical convergence rates and demonstrate the accuracy, robustness, and effectiveness of the proposed method.

\end{itemize}

The remainder of the paper is organized as follows. Section~2 introduces the weak Laplacian operator and its discrete counterpart. Section~3 presents the LS-WG formulation for the biharmonic Cauchy problem and establishes the uniqueness of the discrete solution provided that the continous biharmonic Cauchy problem admits a unique solution. Section~4 derives optimal-order error estimates in the discrete energy norm. Section~5 reports numerical experiments validating the theoretical results and illustrating the performance of the proposed method.

Throughout this paper, we employ standard notation for Sobolev spaces and their associated norms. For any bounded domain $D\subset\mathbb{R}^d$ with Lipschitz boundary, we denote by $H^s(D)$ the usual Sobolev space with norm $\|\cdot\|_{s,D}$ and seminorm $|\cdot|_{s,D}$. The corresponding inner product is denoted by $(\cdot,\cdot)_{D}$ when $s=0$. For simplicity, the subscript indicating the domain is omitted whenever no confusion arises.

\section{Discrete Weak Laplacian} 

In this section, we will  introduce the definitions of the weak Laplacian and its discrete counterpart, which serve as the fundamental building blocks of the proposed LS-WG method.

Let $T$ be a polygonal element with boundary $\partial T$. A \emph{weak function} on $T$ is defined as
\[
v=\{v_0,v_b,v_n\},
\]
where $v_0\in L^2(T)$, $v_b\in L^2(\partial T)$, and $v_n\in L^2(\partial T)$. Here, $v_0$ represents the value of $v$ in the interior of $T$, $v_b$ denotes the value of $v$ on the boundary $\partial T$, and $v_n$ approximates the normal derivative $\nabla v\cdot\bn$ on $\partial T$ with $\bn$ being the unit outward normal vector on $\partial T$. In general, the boundary components $v_b$ and $v_n$ are treated as independent unknowns and are not required to coincide with the traces of $v_0$ and $\nabla v_0\cdot\bn$, respectively. The collection of all weak functions on $T$ is denoted by
\[
W(T)=\{v=\{v_0,v_b,v_n\}: v_0\in L^2(T),\;
v_b\in L^2(\partial T),\;
v_n\in L^2(\partial T)\}.
\]

The weak Laplacian, denoted by $\Delta_w$, is defined as a linear operator from $W(T)$ into the dual space of $H^2(T)$. Specifically, for any $v\in W(T)$, the weak Laplacian $\Delta_wv$ is the bounded linear functional on $H^2(T)$ satisfying
\begin{equation}\label{delta}
(\Delta_wv,\varphi)_T
=
(v_0,\Delta\varphi)_T
-
\langle v_b,\nabla\varphi\cdot\bn\rangle_{\partial T}
+
\langle v_n,\varphi\rangle_{\partial T},
\qquad
\forall\,\varphi\in H^2(T),
\end{equation}
where $\bn$ denotes the unit outward normal vector on $\partial T$.

For a nonnegative integer $r$, let $P_r(T)$ denote the space of polynomials of degree at most $r$ on $T$. The \emph{discrete weak Laplacian}, denoted by $\Delta_{w,r,T}$, is the unique polynomial in $P_r(T)$ satisfying
\begin{equation}\label{discretedelta}
(\Delta_{w,r,T}v,\varphi)_T
=
(v_0,\Delta\varphi)_T
-
\langle v_b,\nabla\varphi\cdot\bn\rangle_{\partial T}
+
\langle v_n,\varphi\rangle_{\partial T},
\qquad
\forall\,\varphi\in P_r(T).
\end{equation}

If $v_0\in H^2(T)$, an application of the usual integration by parts to \eqref{discretedelta} yields the equivalent representation
\begin{equation}\label{discretedelta2}
(\Delta_{w,r,T}v,\varphi)_T
=
(\Delta v_0,\varphi)_T
-
\langle v_b-v_0,\nabla\varphi\cdot\bn\rangle_{\partial T}
+
\langle v_n-\nabla v_0\cdot\bn,\varphi\rangle_{\partial T},  
\end{equation}
for all $\varphi\in P_r(T)$.

\section{Least-Squares Weak Galerkin Method}\label{Section:WGFEM}

Let $\mathcal{T}_h$ be a shape-regular partition of the domain
$\Omega\subset\mathbb{R}^2$ into polygonal  elements as
defined in \cite{wy3655}. Denote by $\mathcal{E}_h$ the set of all
edges  of $\mathcal{T}_h$, and let
$\mathcal{E}_h^0=\mathcal{E}_h\setminus\partial\Omega$
be the set of all interior edges. For each
$T\in\mathcal{T}_h$, let $h_T$ denote the diameter of $T$, and define
the mesh size by
$h=\max_{T\in\mathcal{T}_h}h_T$.

For an integer $k\ge 2$, we define the local weak finite element spaces
on each element $T\in\mathcal{T}_h$ by
\begin{equation*} 
W(k,T)=
\Big\{
\{v_0,v_b,v_n\}:
v_0\in P_k(T),\
v_b\in P_k(e),\
v_n\in P_{k}(e)
\Big\}.
\end{equation*}
The global weak finite element space is defined by
\begin{equation*} 
W_h=
\Big\{
\{v_0,v_b,v_n\}:
\{v_0,v_b,v_n\}|_T\in W(k,T),
\ \forall T\in\mathcal{T}_h
\Big\}.
\end{equation*}  

We further introduce the subspace
\begin{equation*} 
W_h^0=
\Big\{
\{v_0,v_b,v_n\}\in W_h:
v_b=0,\;
v_n=0
\ \text{on }\Gamma_1
\Big\}.
\end{equation*} 

For simplicity, the discrete weak Laplacian
$\Delta_{w,k,T}$  is  denoted by $\Delta_{w}$,  i.e., 
\[
(\Delta_{w} v)|_T
=
\Delta_{w,k,T}(v|_T), \qquad \forall v\in W_h,\   T\in\mathcal{T}_h.
\]

To weakly enforce the consistency between the interior and boundary
components of weak functions, we introduce the stabilizer
\[
s(u,v)
=
\sum_{T\in\mathcal{T}_h}
h_T^{-3}
\langle
u_0-u_b,\,
v_0-v_b
\rangle_{\partial T}
+
h_T^{-1}
\langle
\nabla u_0\cdot\mathbf n-u_n,\,
\nabla v_0\cdot\mathbf n-v_n
\rangle_{\partial T},
\]
for all $u,v\in W_h$.

Next, define the bilinear form
\[
a((w,u),(q,v))
=
\sum_{T\in\mathcal{T}_h} 
(\Delta_{w}w,\Delta_{w}q)_T
+
(\Delta_{w}u-w_0,\,
\Delta_{w}v-q_0)_T,
\]
for all
$(w,u),(q,v)\in W_h\times W_h$.

The least-squares weak Galerkin   method for the biharmonic
Cauchy problem \eqref{model2} is stated as follows.

\begin{algorithm}\label{PDWG1}
Find
$(w_h,u_h)\in W_h\times W_h$
such that
\begin{equation}\label{WGboun}
u_b=Q_bg_1,
u_n=Q_bg_2, w_b=Q_bg_3, w_n=Q_b g_4
\quad\text{on }\Gamma_1,
\end{equation}
and
\begin{equation}\label{WG}
a((w_h,u_h),(q,v))
+s(u_h,v)
+s(w_h,q)
=
\sum_{T\in\mathcal{T}_h}
(f,\Delta_{w}q)_T,
\end{equation}
for all
$(q,v)\in W_h^0\times W_h^0$.
Here,
$Q_b$   denote the
$L^2$ projection operators onto
$P_k(e)$.
\end{algorithm}

\begin{theorem}\label{theorem1}
Assume that the continuous biharmonic Cauchy problem \eqref{model2}
admits a unique solution. Then the least-squares weak Galerkin
scheme \eqref{WG} admits a unique solution.
\end{theorem}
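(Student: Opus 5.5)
Since \eqref{WG} is a square linear system for the degrees of freedom not fixed by \eqref{WGboun}, I will show that the homogeneous problem has only the trivial solution. Take $f=0$ and $g_1=g_2=g_3=g_4=0$. Then $(w_h,u_h)\in W_h^0\times W_h^0$, and I choose $(q,v)=(w_h,u_h)$ in \eqref{WG}. This gives
\[
\sum_{T}\|\Delta_w w_h\|_T^2+\|\Delta_w u_h-w_0\|_T^2+s(u_h,u_h)+s(w_h,w_h)=0 .
\]
Every term is nonnegative, so each one vanishes:
\begin{itemize}
\item $\Delta_w w_h=0$ and $\Delta_w u_h=w_0$ on every $T$;
\item $w_0=w_b$, $\nabla w_0\cdot\mathbf n=w_n$, $u_0=u_b$, $\nabla u_0\cdot\mathbf n=u_n$ on every $\partial T$.
\end{itemize}

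Next I convert these identities into statements about the piecewise polynomials $u_0,w_0$.

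\emph{Continuity.} On an interior edge $e$, $v_b$ is single valued. Hence $w_0$ and $u_0$ are continuous across $e$. Also $v_n$ is attached to a fixed edge orientation, i.e.\ $v_n|_{T_1}=-v_n|_{T_2}$. Therefore $\nabla w_0$ and $\nabla u_0$ have continuous normal components. This makes $w_0,u_0\in C^1(\Omega)$, being piecewise polynomial, continuous, and with continuous normal derivatives. In fact they are $H^2(\Omega)$: the tangential derivative is continuous because the trace is continuous.

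\emph{Boundary data.} On $\Gamma_1$ we have $w_b=u_b=0$ and $w_n=u_n=0$. So $w_0=\partial_{\mathbf n}w_0=0$ and $u_0=\partial_{\mathbf n}u_0=0$ there.

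\emph{Strong Laplacians.} By \eqref{discretedelta2} and the vanishing stabilizer, $(\Delta_w w_h,\varphi)_T=(\Delta w_0,\varphi)_T$ for all $\varphi\in P_k(T)$. Since $\Delta w_0\in P_{k-2}(T)\subset P_k(T)$, we get $\Delta_w w_h=\Delta w_0$. Likewise $\Delta_w u_h=\Delta u_0$.

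Combining these, $\Delta w_0=0$ and $\Delta u_0=w_0$ hold elementwise. Because $w_0,u_0\in H^2(\Omega)$, they hold in $L^2(\Omega)$. Thus $(w_0,u_0)$ solves \eqref{model2} with homogeneous data. By the assumed uniqueness, $w_0=u_0=0$. The vanishing stabilizer then gives $w_b=u_b=0$ and $w_n=u_n=0$ on all of $\partial T$. Hence $(w_h,u_h)=0$.

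The main obstacle is the continuity step, specifically the sign convention for $v_n$. It must be read so that the identity $\nabla u_0\cdot\mathbf n=u_n$ from both neighbors yields continuity of the normal flux rather than a contradiction. I would state explicitly that $v_n$ is defined as $v_{n,e}\,\mathbf n_e\cdot\mathbf n$ with a fixed edge normal $\mathbf n_e$.

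A secondary point is to check that the weak Laplacian coincides with the strong Laplacian on each element. This uses $k\ge2$, so that $\Delta w_0\in P_k(T)$ is a valid test polynomial.
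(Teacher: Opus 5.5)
Your proof is correct and follows the paper's argument essentially step for step: you test the homogeneous problem with $(q,v)=(w_h,u_h)$, deduce that the weak Laplacian terms and the stabilizers vanish, and use \eqref{discretedelta2} to identify $\Delta_w$ with the elementwise Laplacian. You then conclude that $u_0,w_0\in H^2(\Omega)$ solve the homogeneous Cauchy problem and invoke the uniqueness assumption. Your explicit treatment of the single-valued $v_b$ and the sign convention for $v_n$ spells out what the paper compresses into ``the continuity conditions imply $u_0,w_0\in H^2(\Omega)$''. As a minor aside, $\Delta w_0\in P_{k-2}(T)\subset P_k(T)$ holds for every $k$, so $k\ge 2$ is not actually what that step needs.
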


\begin{proof}
It suffices to prove uniqueness. Let
$f=0$, $g_1=0$,  $g_2=0$, $g_3=0$, and $g_4=0$. Suppose
$(w_h,u_h)\in W_h\times W_h$ satisfies \eqref{WGboun}-\eqref{WG}. Choosing the test functions
$q=w_h$ and $v=u_h$ in \eqref{WG} gives
\begin{equation}\label{WG_proof}
\sum_{T\in\mathcal{T}_h}
\|\Delta_{w} w_h\|_T^2
+
\|\Delta_{w}u_h-w_0\|_T^2
+s(u_h,u_h)
+s(w_h,w_h)
=0.
\end{equation}
Hence,
\[
\Delta_{w} w_h=0,\qquad
\Delta_{w}u_h-w_0=0
\quad\text{on each }T\in\mathcal{T}_h,
\]
and
\[
u_0=u_b, \ 
\nabla u_0\cdot\mathbf n=u_n,\ 
w_0=w_b, \ 
\nabla w_0\cdot\mathbf n=w_n
\quad\text{on }\partial T.
\]

Using the identity \eqref{discretedelta2}, together with the above
relations, we obtain
\[
(\Delta_{w} u_h,\phi)_T
=(\Delta u_0,\phi)_T,
\qquad
\forall\,\phi\in P_{k}(T),
\]
which implies
\[
\Delta_{w}u_h=\Delta u_0,   
\qquad\text{on each }T\in\mathcal{T}_h.
\]
Similarly, we have
\[
    \Delta_{w}w_h=\Delta w_0,
\qquad\text{on each }T\in\mathcal{T}_h.
\]

Consequently,
\[
\Delta w_0=0,
\qquad
\Delta u_0-w_0=0, \quad\text{on each }T\in\mathcal{T}_h.
\]
Moreover, the continuity conditions imply that
$u_0\in H^2(\Omega)$ and
$w_0\in H^2(\Omega)$, and 
\[
\Delta w_0=0,
\qquad
\Delta u_0-w_0=0, \quad\text{in}\ \Omega,
\]
while the homogeneous boundary conditions yield
\[
u_0=0,  \ 
\nabla u_0\cdot\mathbf n=0,  \ w_0=0,
\ 
\nabla w_0\cdot\mathbf n=0
\quad\text{on }\Gamma_1.
\]
Since we assume the continuous Cauchy problem \eqref{model2} admits a unique
solution, it follows that
\[
u_0=0,
\qquad
w_0=0
\quad\text{in }\Omega.
\]
Therefore,
$u_b=u_n=w_b=w_n=0$, and hence
\[
u_h=0,
\qquad
w_h=0  \qquad \text{in }\Omega.
\]
This proves the uniqueness of the discrete solution.
\end{proof}

Under the uniqueness assumption for the continuous biharmonic Cauchy
problem, we define the energy norm on
$W_h^0\times W_h^0$ by
\begin{equation}\label{norm}
    \3bar(w,u)\3bar
=(
\sum_{T\in\mathcal T_h}
\|\Delta_{w} w\|_T^2
+
\|\Delta_{w}u-w_0\|_T^2
+
s(u,u)
+
s(w,w))^{1/2}.
\end{equation}

By an argument identical to that used in the proof of
Theorem~\ref{theorem1}, one readily verifies that
$\3bar\cdot\3bar$
defines a norm on
$W_h^0\times W_h^0$.
 
\section{Error Estimates}
We begin by recalling several preliminary results that will be used in the error analysis.

Let $\mathcal{T}_h$ be a shape-regular finite element partition of the domain $\Omega$. For any element $T\in\mathcal{T}_h$ and any function $\phi\in H^1(T)$, the following trace inequality holds \cite{wy3655}:
\begin{equation}\label{tracein}
\|\phi\|_{\partial T}^2
\le
C (
h_T^{-1}\|\phi\|_T^2
+
h_T\|\nabla\phi\|_T^2
  ).
\end{equation}

For each element $T\in\mathcal{T}_h$, let
$Q_0: L^2(T)  \rightarrow P_k(T)$ and 
$Q_b: L^2(e)  \rightarrow  P_k(e)$  
denote the corresponding $L^2$ projection operators.
For any $u\in H^2(\Omega)$, define the projection
$Q_h u\in W_h$ by
\[
Q_h u
=
 \{
Q_0 u,\,
Q_b u,\,
Q_b (\nabla u\cdot\mathbf n)
  \}.
\]

\begin{lemma}\cite{wy3655}
Let $\mathcal{T}_h$ be a shape-regular finite element partition of $\Omega$. Then, for any
$0\le s\le 2$ and $1\le m\le k$, the following approximation estimate holds:
\begin{equation}\label{3.1}
\sum_{T\in\mathcal{T}_h}
h_T^{2s}
\|u-Q_0 u\|_{s,T}^2
\le
Ch^{2(m+1)}
\|u\|_{m+1}^2.
\end{equation}
\end{lemma}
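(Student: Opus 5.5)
This is the standard $L^2$ projection estimate on shape-regular polygonal meshes; I would prove it elementwise and then sum. Fix $T\in\mathcal T_h$ and $1\le m\le k$. Since $Q_0$ is the $L^2(T)$ projection onto $P_k(T)\supseteq P_m(T)$, it reproduces every $p\in P_m(T)$. Hence
\[
u-Q_0u=(I-Q_0)(u-p).
\]
I would take $p$ to be an averaged Taylor polynomial of degree $m$ over a ball inscribed in $T$, or in a star-shaped sub-region that shape regularity provides. The Bramble--Hilbert lemma then gives
\[
|u-p|_{j,T}\le C h_T^{m+1-j}|u|_{m+1,T},\qquad 0\le j\le m+1.
\]
Shape regularity in the sense of \cite{wy3655} (chunkiness bounded uniformly) makes $C$ independent of $T$.

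Next I would bound the projection of the remainder. For $s=0$, $\|(I-Q_0)(u-p)\|_T\le\|u-p\|_T\le Ch_T^{m+1}|u|_{m+1,T}$. For $s=1,2$ I would write
\[
|u-Q_0u|_{s,T}\le |u-p|_{s,T}+|Q_0(u-p)|_{s,T}.
\]
Since $Q_0(u-p)\in P_k(T)$, the inverse inequality $|q|_{s,T}\le Ch_T^{-s}\|q\|_T$ for $q\in P_k(T)$ applies; it holds on shape-regular polygons by the covering argument of \cite{wy3655}. Combined with $L^2$-stability of $Q_0$, this gives $|Q_0(u-p)|_{s,T}\le Ch_T^{-s}\|u-p\|_T$. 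Both terms are then $O(h_T^{m+1-s}|u|_{m+1,T})$, and the lower-order parts of the full $H^s$ norm are handled the same way. For non-integer $s\in(0,2)$, interpolating between the integer cases yields the same power. When $s=2$ and $m=1$ we need $|u-p|_{2,T}\le C|u|_{2,T}$, which is still covered since $j\le m+1$.

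Finally, multiply by $h_T^{2s}$ to get $h_T^{2s}\|u-Q_0u\|_{s,T}^2\le Ch_T^{2(m+1)}\|u\|_{m+1,T}^2$. Then bound $h_T\le h$ and sum over $T$, using that the elements have disjoint interiors, so the local norms add up to $\|u\|_{m+1}^2$.

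The main obstacle is keeping the constants uniform on general polygons. The Bramble--Hilbert constant and the inverse-inequality constant must depend only on the shape-regularity parameters and not on the geometry of each individual polygon. I would handle this with the assumptions of \cite{wy3655}: each $T$ is star-shaped with respect to a ball of radius comparable to $h_T$, and it admits a sub-triangulation into shape-regular triangles. These two properties give uniform Bramble--Hilbert and inverse-inequality constants, respectively.
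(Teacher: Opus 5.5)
The paper gives no proof of this lemma; it imports it from \cite{wy3655}, so the comparison is with the argument there. Your step from $s=0$ to $s=1,2$ is the standard one and is fine: $L^2$-stability of $Q_0$ plus an inverse inequality on $P_k(T)$ with a constant uniform in $T$.

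The difference is in how you produce the approximating polynomial. You apply the Bramble--Hilbert lemma on $T$ itself, and you justify uniform constants by saying each $T$ is star-shaped with respect to a ball of radius comparable to $h_T$ (``chunkiness bounded'') under the assumptions of \cite{wy3655}. That is not one of their assumptions. Their shape-regularity conditions are:
\begin{itemize}
\item lower bounds on $|T|$ and on edge lengths relative to $h_T$;
\item a shape-regular sub-triangulation of each $T$ into simplices of size comparable to $h_T$;
\item a shape-regular circumscribed simplex $S(T)\supset T$ with $h_{S(T)}\le Ch_T$, such that each $S(T)$ meets only boundedly many of the others.
\end{itemize}
A U-shaped element built from a handful of congruent right isosceles triangles satisfies all of these. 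It is not star-shaped with respect to any point, since seeing the inner top corners of both arms forces the viewing point to lie both left and right of the notch. Your fallback, averaging over a ball in a star-shaped sub-region, only controls $u-p$ on that sub-region, not on all of $T$. So as written, your proof establishes the estimate for meshes whose elements are star-shaped with respect to balls of radius $\gtrsim h_T$. That is a common polygonal shape-regularity notion, and the non-convex test meshes of the numerical section happen to satisfy it. It does not cover the class of partitions the citation refers to.

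The route in \cite{wy3655} avoids star-shapedness. One extends $u$ to $\tilde u\in H^{m+1}(\mathbb{R}^2)$ with $\|\tilde u\|_{m+1,\mathbb{R}^2}\le C\|u\|_{m+1}$. One then takes a polynomial approximation of $\tilde u$ on the convex, shape-regular simplex $S(T)$, where the standard estimate holds with a uniform constant, and restricts it to $T\subset S(T)$. The sum over $T$ is controlled by the bounded overlap of the simplices $S(T)$.

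The trade-off between the two routes is as follows. Yours gives a cleaner summation: the local seminorms $|u|_{m+1,T}^2$ add up exactly, with no extension operator and no overlap count. The price is a geometric hypothesis on every element. The source's version allows general element shapes, at the price of the extension theorem and the overlap assumption.

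To make your proof cover the cited hypotheses, you have two options. You can switch to the $S(T)$-plus-extension argument. Alternatively, keep Bramble--Hilbert but chain it through the sub-triangulation. Two edge-adjacent shape-regular triangles form a quadrilateral that is star-shaped with respect to a ball of radius $\sim h_T$ centered at the midpoint of their common edge. Each $T$ contains only boundedly many sub-triangles, so the chain has bounded length.
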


\begin{lemma}
The projection operators satisfy the following commutative properties:
\begin{equation}\label{EQ:CommutativeP}
\Delta_{w}(Q_h u)
=
Q_0(\Delta u),
\qquad
\forall\,u\in H^2(T),
\end{equation}
\end{lemma}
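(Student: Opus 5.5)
Both sides of \eqref{EQ:CommutativeP} lie in $P_k(T)$, so I will show they have the same $L^2(T)$ inner product with every $\varphi\in P_k(T)$. Fix $T\in\mathcal{T}_h$, $u\in H^2(T)$ and $\varphi\in P_k(T)$. First I will apply the definition \eqref{discretedelta} of the discrete weak Laplacian (with $r=k$) to $Q_hu=\{Q_0u,\,Q_bu,\,Q_b(\nabla u\cdot\mathbf n)\}$, which gives
\[
(\Delta_w Q_hu,\varphi)_T=(Q_0u,\Delta\varphi)_T-\langle Q_bu,\nabla\varphi\cdot\mathbf n\rangle_{\partial T}+\langle Q_b(\nabla u\cdot\mathbf n),\varphi\rangle_{\partial T}.
\]

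Next I will remove each projection using the defining orthogonality of the $L^2$ projections together with the polynomial degrees of the test factors. Since $\Delta\varphi\in P_{k-2}(T)\subset P_k(T)$, we have $(Q_0u,\Delta\varphi)_T=(u,\Delta\varphi)_T$. On each edge $e\subset\partial T$ the normal $\mathbf n$ is constant on the polygon's edge, so $\nabla\varphi\cdot\mathbf n|_e\in P_{k-1}(e)\subset P_k(e)$, and $\varphi|_e\in P_k(e)$. Hence $\langle Q_bu,\nabla\varphi\cdot\mathbf n\rangle_e=\langle u,\nabla\varphi\cdot\mathbf n\rangle_e$ and $\langle Q_b(\nabla u\cdot\mathbf n),\varphi\rangle_e=\langle\nabla u\cdot\mathbf n,\varphi\rangle_e$. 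Then I will sum over the edges of $T$.

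Finally, I will integrate by parts twice, which is legitimate for $u\in H^2(T)$ and $\varphi$ smooth:
\[
(u,\Delta\varphi)_T-\langle u,\nabla\varphi\cdot\mathbf n\rangle_{\partial T}+\langle\nabla u\cdot\mathbf n,\varphi\rangle_{\partial T}=(\Delta u,\varphi)_T=(Q_0\Delta u,\varphi)_T.
\]
Since $\varphi\in P_k(T)$ is arbitrary and both $\Delta_w Q_hu$ and $Q_0\Delta u$ lie in $P_k(T)$, this gives $\Delta_w(Q_hu)=Q_0(\Delta u)$.

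There is no real obstacle here. The only points to check are that the test traces lie in the projection ranges, which uses straight edges and $v_b,v_n\in P_k(e)$, and that the Green identity holds for $H^2$ functions on Lipschitz polygons, which is standard.
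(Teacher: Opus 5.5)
Your proof is correct and follows the paper's own argument. Like the paper, you expand $\Delta_w(Q_hu)$ by the definition of the discrete weak Laplacian, remove $Q_0$ and $Q_b$ using $\Delta\varphi\in P_{k-2}(T)$, $\nabla\varphi\cdot\mathbf n|_e\in P_{k-1}(e)$ and $\varphi|_e\in P_k(e)$, and integrate by parts to reach $(\Delta u,\varphi)_T=(Q_0\Delta u,\varphi)_T$; your remark that straight edges make $\mathbf n$ constant on each edge makes explicit a point the paper leaves implicit.
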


\begin{proof}
For any $q\in P_{k}(T)$, it follows from the definition of the discrete weak Laplacian \eqref{discretedelta} that
\[
\begin{aligned}
(\Delta_{w,k}Q_h u,q)_T
&=(Q_0u,\Delta q)_T
-\langle Q_bu,\nabla q\cdot\mathbf n\rangle_{\partial T}
+\langle Q_b (\nabla u\cdot\mathbf n),q\rangle_{\partial T} \\
&=(u,\Delta q)_T
-\langle u,\nabla q\cdot\mathbf n\rangle_{\partial T}
+\langle\nabla u\cdot\mathbf n,q\rangle_{\partial T} \\
&=(\Delta u,q)_T
=(\mathcal{Q}_h \Delta u,q)_T,
\end{aligned}
\]
where we have used the facts that
$\Delta q\in P_{k-2}(T)\subset P_k(T)$,
$\nabla q\cdot\mathbf n\in P_{k-1}(e)\subset P_k(e)$,
and
$q\in P_{k}(T)$.
Since the above identity holds for all $q\in P_{k}(T)$, we obtain
\[
\Delta_ {w}(Q_h u)
=
Q_0(\Delta u),
\]
which proves \eqref{EQ:CommutativeP}.  
\end{proof}

\begin{theorem}
Let $w$ and $u$ be the exact solutions to the Cauchy biharmonic problem \eqref{model2}, and let $u_h \in W_h$ and $w_h\in W_h$ be the numerical solutions to the least-squares Weak Galerkin scheme \eqref{WG}. We define the error functions  
$$e_{u_h}=\{e_{u_0}, e_{u_b}, e_{u_n}\}=Q_h  u- u_h, \qquad e_{w_h}=\{e_{w_0}, e_{w_b}, e_{w_n}\}=Q_h  w-w_h.$$ 
There  exists a constant $C$, independent of $h$, such that
\begin{equation}\label{est1}
    \3bar (e_{w_h}, e_{u_h})\3bar \leq Ch^{k-1} (\|u\|_{k+1}+ \|w\|_{k+1}).
\end{equation}
\end{theorem}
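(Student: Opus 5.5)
The proof will follow the standard least-squares route: derive an error equation, test it with the error itself, and bound the resulting consistency terms.

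\emph{Step 1: error equation.} Since $\Delta w=f$ and $\Delta u=w$, the commutative property \eqref{EQ:CommutativeP} gives, on each $T$,
\[
\Delta_w(Q_hw)=Q_0\Delta w=Q_0 f,
\qquad
\Delta_w(Q_hu)-Q_0w=Q_0(\Delta u-w)=0 .
\]
For $(q,v)\in W_h^0\times W_h^0$ we have $\Delta_w q\in P_k(T)$, so $(Q_0f,\Delta_wq)_T=(f,\Delta_wq)_T$. Therefore
\[
a((Q_hw,Q_hu),(q,v))=\sum_T (f,\Delta_w q)_T .
\]
Subtracting the scheme \eqref{WG} yields the error equation
\[
a((e_{w_h},e_{u_h}),(q,v))+s(e_{u_h},v)+s(e_{w_h},q)=s(Q_hu,v)+s(Q_hw,q).
\]
The boundary data \eqref{WGboun} are exactly $Q_b$ of the traces, so $e_{u_b}=e_{u_n}=e_{w_b}=e_{w_n}=0$ on $\Gamma_1$. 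Hence $(e_{w_h},e_{u_h})\in W_h^0\times W_h^0$ is an admissible test pair.

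\emph{Step 2: energy identity.} Taking $q=e_{w_h}$ and $v=e_{u_h}$ gives
\[
\3bar(e_{w_h},e_{u_h})\3bar^2=s(Q_hu,e_{u_h})+s(Q_hw,e_{w_h}).
\]
By Cauchy--Schwarz, the right-hand side is at most
\[
\bigl(s(Q_hu,Q_hu)^{1/2}+s(Q_hw,Q_hw)^{1/2}\bigr)\,\3bar(e_{w_h},e_{u_h})\3bar .
\]
It therefore suffices to show $s(Q_hu,Q_hu)\le Ch^{2(k-1)}\|u\|_{k+1}^2$, and the same bound for $w$.

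\emph{Step 3: consistency estimate (main work).} Write $Q_0u-Q_bu=(Q_0u-u)+(u-Q_bu)$ on each edge. Since $Q_b$ is the $L^2(e)$ projection and $Q_0u|_e\in P_k(e)$, we have $Q_0u-Q_bu=Q_b(Q_0u-u)$. Thus
\[
\|Q_0u-Q_bu\|_{\partial T}\le\|Q_0u-u\|_{\partial T}.
\]
The trace inequality \eqref{tracein} gives
\[
h_T^{-3}\|Q_0u-u\|_{\partial T}^2\le C\bigl(h_T^{-4}\|u-Q_0u\|_T^2+h_T^{-2}\|\nabla(u-Q_0u)\|_T^2\bigr).
\]
Summing over $T$ and applying \eqref{3.1} with $m=k$ and $s=0,1$ bounds this by $Ch^{2(k-1)}\|u\|_{k+1}^2$.

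For the normal-derivative term, $\nabla Q_0u\cdot\mathbf n$ is a polynomial of degree $k-1\le k$ on each edge. So
\[
\nabla Q_0u\cdot\mathbf n-Q_b(\nabla u\cdot\mathbf n)=Q_b\bigl(\nabla(Q_0u-u)\cdot\mathbf n\bigr),
\]
and the trace inequality applied to $\nabla(u-Q_0u)$ gives
\[
h_T^{-1}\|\cdot\|_{\partial T}^2\le C\bigl(h_T^{-2}|u-Q_0u|_{1,T}^2+|u-Q_0u|_{2,T}^2\bigr).
\]
By \eqref{3.1}, summed over $T$, this is again $O(h^{2(k-1)})\|u\|_{k+1}^2$. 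The same argument applies to $w$.

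The delicate points are using \eqref{3.1} with $s$ up to $2$, and ensuring the $L^2$-projections onto edges can be dropped as above, which works because the polynomial traces lie in $P_k(e)$. Combining Steps 2 and 3 and dividing by $\3bar(e_{w_h},e_{u_h})\3bar$ gives \eqref{est1}.
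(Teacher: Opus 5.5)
Your proposal is correct and follows essentially the same route as the paper. You use the commutative property \eqref{EQ:CommutativeP} to get the error equation with right-hand side $s(Q_hu,v)+s(Q_hw,q)$, test with the errors to get the energy identity, and bound the consistency terms by Cauchy--Schwarz, the trace inequality \eqref{tracein} and the estimate \eqref{3.1}. Your version also spells out two facts the paper uses tacitly: that $(e_{w_h},e_{u_h})\in W_h^0\times W_h^0$ is an admissible test pair, and that $Q_0u-Q_bu=Q_b(Q_0u-u)$ (with the analogous identity for the normal-derivative term), which is what lets the edge projection be dropped before applying the trace inequality.
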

\begin{proof}
Testing the first equation in \eqref{model2} with
$\Delta_{w}q$
and the second equation with
$\Delta_ {w}v-q_0$,
followed by summation over all elements, yields
\begin{equation*}
\begin{split}
\sum_{T\in \mathcal{T}_h}(\Delta w,  \Delta_{w} q)_T+( \Delta u-w,  \Delta_{w} v-q_0)_T= \sum_{T\in \mathcal{T}_h}( f,  \Delta_{w} q)_T.
\end{split}
\end{equation*}

Applying the commutative properties
\eqref{EQ:CommutativeP},
we obtain
\begin{equation}\label{ss1}
\begin{split}
& \sum_{T\in \mathcal{T}_h}( f,  \Delta_{w} q)_T
\\
=&\sum_{T\in \mathcal{T}_h}(Q_0( \Delta w),  \Delta_{w} q)_T+( Q_0 (\Delta u-w),  \Delta_{w} v-q_0)_T\\
= & \sum_{T\in \mathcal{T}_h}(   \Delta_{w} (Q_h w),  \Delta_{w} q)_T +(   \Delta_{w}(Q_h u)-Q_0 w,  \Delta_{w} v-q_0)_T.
\end{split}
\end{equation}  

Subtracting \eqref{ss1} from \eqref{WG} gives
\begin{equation*} 
\begin{split}
 & \sum_{T\in \mathcal{T}_h}(   \Delta_{w} (Q_h w-w_h),  \Delta_{w} q)_T\\& 
 +(   \Delta_{w} (Q_h  u-u_h)-(Q_0 w-w_0),  \Delta_{w} v-q_0)_T -s(u_h, v) - s(w_h, q) = 0.
\end{split}
\end{equation*}  

Introducing the error functions
$e_{u_h}$ and $e_{w_h}$,
the above identity can be written as
\begin{equation}\label{ss}
\begin{split}
&\sum_{T\in \mathcal{T}_h}(   \Delta_{w} e_{w_h},  \Delta_{w} q)_T+(   \Delta_{w} e_{u_h}-e_{w_0},  \Delta_{w} v-q_0)_T\\&+s(e_{u_h}, v) + s(e_{w_h}, q) = s(Q_h  u, v) + s(Q_h  w, q).
\end{split}
\end{equation}  

Letting $v=e_{u_h}$ and $q=e_{w_h}$ gives
\begin{equation}\label{sss}
    \3bar (e_{w_h}, e_{u_h})\3bar ^2= s(Q_h  u, e_{u_h}) + s_2(Q_h  w, e_{w_h}).
\end{equation}
Using the trace inequality \eqref{tracein} and the estimate \eqref{3.1} with $s=0, 1, 2$ and $m=k$, we have
\begin{equation}\label{a1}
\begin{split}
& \quad \ s(Q_h  u, e_{u_h})\\ &= \sum_{T\in \mathcal{T}_h} h_T^{-3}\langle Q_0  u-Q_b  u, e_{u_0}-e_{u_b}\rangle_{\partial T}\\
& \ \ \ \ +h_T^{-1}\langle \nabla Q_0  u \cdot \bn -Q_b  ( \nabla u\cdot\bn), \nabla e_{u_0} \cdot\bn-e_{u_n}\rangle_{\partial T}\\
&\leq  (\sum_{T\in \mathcal{T}_h}h_T^{-3}\|Q_0  u-Q_b  u\|^2_{\partial T} )^{\frac{1}{2}}  (\sum_{T\in \mathcal{T}_h}h_T^{-3}\|e_{u_0}-e_{u_b}\|^2_{\partial T} )^{\frac{1}{2}}\\
& \ \ \ \ 
   + (\sum_{T\in \mathcal{T}_h}h_T^{-1}\|\nabla Q_0  u \cdot \bn -Q_b( \nabla u\cdot\bn)\|^2_{\partial T} )^{\frac{1}{2}} \\
   & \ \ \ \ 
   \cdot  (\sum_{T\in \mathcal{T}_h}h_T^{-1}\|\nabla e_{u_0} \cdot\bn-e_{u_n}\|^2_{\partial T} )^{\frac{1}{2}}\\
&\leq C (\sum_{T\in \mathcal{T}_h}h_T^{-4}\|Q_0 u- u\|^2_T + h_T^{-2}\|Q_0  u- u\|^2_{1, T})^{\frac{1}{2}}\3bar (e_{w_h}, e_{u_h})\3bar\\
 &\ \ \ \ +C(\sum_{T\in \mathcal{T}_h}h_T^{-2}\|\nabla Q_0  u \cdot \bn -  \nabla u\cdot\bn \|^2_{ T}+\|\nabla Q_0  u \cdot \bn -  \nabla u\cdot\bn \|^2_{ 1, T})^{\frac{1}{2}} \\
  &\ \ \ \ \cdot\3bar (e_{w_h}, e_{u_h})\3bar\\
&\leq C h^{k-1}\|u\|_{k+1} \3bar (e_{w_h}, e_{u_h})\3bar.
\end{split}
\end{equation}

Similar to \eqref{a1}, we have
\begin{equation}\label{a2} 
 \qquad s(Q_h  w, e_{w_h})  \leq C h^{k-1}\|w\|_{k+1} \3bar (e_{w_h}, e_{u_h})\3bar. 
\end{equation}

Combining \eqref{sss}, \eqref{a1}, and \eqref{a2} and dividing both sides by
$\3bar(e_{w_h},e_{u_h}) \3bar$
(if nonzero) yields \eqref{est1}. The proof is complete. 
\end{proof}

\begin{theorem}
Let $(w,u)$ be the exact solution of the biharmonic Cauchy problem \eqref{model2}, and let
$(w_h,u_h)\in W_h\times W_h$
be the corresponding numerical solution of the least-squares weak Galerkin scheme \eqref{WG}. Then there exists a constant $C$, independent of the mesh size $h$, such that
\[
\3bar (w-w_h, u-u_h)\3bar
\le
Ch^{k-1}(\|u\|_{k+1}
+
 \|w\|_{k+1}).
\]
\end{theorem}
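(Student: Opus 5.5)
The plan is a triangle inequality that splits the error through the projection $Q_h$. Since $\3bar\cdot\3bar$ is a seminorm built from quadratic forms, we write
\[
\3bar (w-w_h, u-u_h)\3bar \le \3bar (w-Q_hw, u-Q_hu)\3bar + \3bar (e_{w_h}, e_{u_h})\3bar .
\]
Here the exact solution is interpreted as the weak function $\{w,w|_{\partial T},\nabla w\cdot\mathbf n\}$, and similarly for $u$; this interpretation is what makes the first term meaningful. The second term is bounded by $Ch^{k-1}(\|u\|_{k+1}+\|w\|_{k+1})$ by the preceding theorem, estimate \eqref{est1}. So everything reduces to a projection-error estimate for the first term.

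For the first term, we bound each piece of \eqref{norm} separately.

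\textbf{Volume terms.} Using \eqref{discretedelta2} with $v_0=w$, $v_b=w$, $v_n=\nabla w\cdot\mathbf n$, we get $\Delta_w w=Q_0\Delta w$. Combined with the commutative property \eqref{EQ:CommutativeP}, this gives $\Delta_w(w-Q_hw)=0$, and likewise $\Delta_w(u-Q_hu)=0$. Hence the remaining volume contribution is
\[
\|\Delta_w(u-Q_hu)-(w-Q_0w)\|_T=\|w-Q_0w\|_T\le Ch^{k+1}\|w\|_{k+1},
\]
by \eqref{3.1} with $s=0$.

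\textbf{Stabilizer terms.} For $s(u-Q_hu,u-Q_hu)$, the boundary differences are
\[
(u-Q_0u)-(u-Q_bu)=Q_bu-Q_0u
\qquad\text{and}\qquad
\nabla(u-Q_0u)\cdot\mathbf n-(\nabla u\cdot\mathbf n-Q_b\nabla u\cdot\mathbf n),
\]
which are the same quantities as in \eqref{a1}, up to sign. First we use $\|Q_bu-Q_0u\|_{\partial T}\le\|u-Q_0u\|_{\partial T}$, which holds because $Q_b$ is an $L^2$ projection and $Q_0u|_e\in P_k(e)$. Then the trace inequality \eqref{tracein} and \eqref{3.1} with $s=0,1,2$ give $s(u-Q_hu,u-Q_hu)^{1/2}\le Ch^{k-1}\|u\|_{k+1}$, exactly as in \eqref{a1}. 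The same argument gives the bound for $s(w-Q_hw,w-Q_hw)$.

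Adding the volume and stabilizer bounds and using $h^{k+1}\le Ch^{k-1}$ yields the claim.

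The main obstacle is conceptual rather than technical. The energy norm \eqref{norm} was defined on $W_h^0\times W_h^0$, whereas $w-w_h$ is not a discrete weak function. So we must state explicitly that $\3bar\cdot\3bar$ is extended, as a seminorm, to weak functions whose boundary components are traces of $H^2$ functions. We must also check that $\Delta_w$ applied to such functions still satisfies the commuting identity. Once this is settled, the triangle inequality is legitimate and the remaining steps are routine.
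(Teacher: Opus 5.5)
Your proposal is correct and follows the paper's proof. You split through $Q_h$ with the triangle inequality, bound $\3bar(e_{w_h},e_{u_h})\3bar$ by \eqref{est1}, and control the projection part via the commutative property \eqref{EQ:CommutativeP}, the approximation estimate \eqref{3.1}, and the \eqref{a1}-type bound on $s(Q_hu,Q_hu)$ and $s(Q_hw,Q_hw)$. The only difference is that you take $\Delta_w$ on the exact solution to be the discrete weak Laplacian of $\{w,w|_{\partial T},\nabla w\cdot\mathbf n\}$, so $\Delta_w(w-Q_hw)=0$ and the volume terms reduce to $\|w-Q_0w\|_T$, whereas the paper bounds them by $\|\Delta w-Q_0\Delta w\|_T$; both are $O(h^{k-1})$, and your unsquared triangle inequality avoids the harmless factor $2$ the paper's squared version drops.
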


\begin{proof}
 By the definition of the energy norm \eqref{norm}, the triangle inequality,  the commutative properties
\eqref{EQ:CommutativeP},  the approximation estimates
\eqref{3.1},   \eqref{a1}, \eqref{a2}, \eqref{est1}, 
 we have
\begin{equation*}
    \begin{split}
  &\3bar (w-w_h, u-u_h)\3bar^2\\  
  =& \sum_{T\in \mathcal{T}_h} \| \Delta_{w}(w-w_h)\|^2_T+\| \Delta_{w}(u-u_h)-(w-w_0)\|^2_T
 \\&+s(w-w_h, w-w_h)+ s(u-u_h, u-u_h)\\
 \leq & \sum_{T\in \mathcal{T}_h} \| \Delta_{w}(w-Q_h w)\|^2_T+\| \Delta_{w}(Q_h w-w_h)\|^2_T\\&
 +\| \Delta_{w}(u-Q_h u)-(w-Q_0 w)\|^2_T+  \| \Delta_{w}( Q_h u-u_h)-(Q_0 w-w_0)\|^2_T
 \\&+s(w-Q_h w, w-Q_h w)+ s(Q_h w-w_h,  Q_h w-w_h)\\&+s(u-Q_h u, u-Q_h u)+s(Q_h u-u_h,  Q_h u-u_h)\\    
  \leq & \sum_{T\in \mathcal{T}_h}\|\Delta_{w} (w-Q_h w)\|^2_T + \| \Delta_{w} (u-Q_h u)-(w-Q_0 w)\|^2_T\\&+s(Q_h w, Q_h w)+s(Q_h u, Q_h u)+\3bar ( Q_h w-w_h,  Q_h u-u_h)\3bar^2\\
  \leq & \sum_{T\in \mathcal{T}_h}\|\Delta   w-Q_0 \Delta w\|^2_T + \|\Delta  u-Q_0  \Delta u \|^2 +\| w-Q_0 w\|^2_T\\&+s(Q_h w, Q_h w)+s(Q_h u, Q_h u)+\3bar ( Q_h w-w_h,  Q_h u-u_h)\3bar^2\\
  \leq & Ch^{2k-2}\|w\|^2_{k+1}+Ch^{2k-2}\|u\|^2_{k+1}+Ch^{2k+2}\|w\|^2_{k+1}\\
   \leq & Ch^{2k-2}\|w\|^2_{k+1}+Ch^{2k-2}\|u\|^2_{k+1}. \end{split}
\end{equation*}
This concludes the proof. 
\end{proof}

\section{Numerical Test}

In the numerical test,  we solve the biharmonic Cauchy problem \eqref{model2} 
   on the unit square domain $\Omega=(0,1)\times(0,1)$ with
\a{ \Gamma_1 = \big[ \{0\}\times(0,1) \big] \cup \big[(0,1)\times\{0\}\big]
   \cup \big[ (0,1)\times\{1\}\big].  }
Two exact solutions of \eqref{model2} are tested:
\an{\label{s1} & \begin{cases} u = (1 - x)^4 (1 - y)^2, \\
           w = 12 (1 - x)^2 ( 1 - y)^2 + 2 ( 1 - x)^4, \end{cases} \\
\label{s2} & \begin{cases} u = 20 (5 x - 2) (2 x - 1) (5 x - 3) (y^2- y), \\
           w = 40 (25 x^2 + 75 y^2 - 25 x - 75 y + 6) (2 x - 1). \end{cases} }   
The first solution is smooth which means it varies very little inside the domain and at the free boundary
  $\partial \Omega\setminus \Gamma_1$.
In contrast, the second solution exhibits large variations at the free boundary
  $\partial \Omega\setminus \Gamma_1$, cf. Figure \ref{f-s2},
 which renders the numerical solution intractable in certain cases.

\begin{figure}[H]
\begin{center}\setlength\unitlength{2.4pt}\centering 
 \begin{picture}(140,45)(0,0) \put(0,41){$G_1:$}  \put(50,41){$G_2:$} \put(100,41){$G_3:$} 
  
\def\sq{\begin{picture}(40,40)(0,0) 
  \multiput(0,0)(40,0){2}{\line(0,1){40}}\multiput(0,0)(0,40){2}{\line(1,0){40}} \end{picture} }
  
\put(0,0){\begin{picture}(40,40)(0,0)
  \multiput(0,0)(0,40){1}{\multiput(0,0)(40,0){1}{\sq}} 
  \end{picture} }
  
\put(50,0){\setlength\unitlength{1.2pt}\begin{picture}(40,40)(0,0)
  \multiput(0,0)(0,40){2}{\multiput(0,0)(40,0){2}{\sq}} 
  \end{picture} } 
\put(100,0){\setlength\unitlength{0.6pt}\begin{picture}(40,40)(0,0)
  \multiput(0,0)(0,40){4}{\multiput(0,0)(40,0){4}{\sq}} 
  \end{picture} } 
\end{picture}\end{center}
\caption{The uniform square grids used in Tables \ref{t1}--\ref{t4}. }
\label{f-1}
\end{figure}
We apply the weak Galerkin \(P_{k}\) finite element methods for \(k = 2, 3, \text{and } 4\) on three types of grids, as shown in Figures \ref{f-1}--\ref{f-3}. The results for the solution \eqref{s1} are listed in Tables \ref{t1}--\ref{t3}. In these tables, \(G_{i}\) denotes the \(i\)-th grid of each type,
  and 
\a{ \3bar E_h\3bar=\3bar ( u- u_h, w-w_h) \3bar,
 } where $\3bar \cdot \3bar$ is defined in \eqref{norm}.

\begin{figure}[H]
\begin{center}\setlength\unitlength{2.4pt}\centering 
 \begin{picture}(140,45)(0,0) \put(0,41){$G_1:$}  \put(50,41){$G_2:$} \put(100,41){$G_3:$} 
  
\def\sq{\begin{picture}(40,40)(0,0) \put(0,40){\line(1,-1){40}}
  \multiput(0,0)(40,0){2}{\line(0,1){40}}\multiput(0,0)(0,40){2}{\line(1,0){40}} \end{picture} }
  
\put(0,0){\begin{picture}(40,40)(0,0)
  \multiput(0,0)(0,40){1}{\multiput(0,0)(40,0){1}{\sq}} 
  \end{picture} }
  
\put(50,0){\setlength\unitlength{1.2pt}\begin{picture}(40,40)(0,0)
  \multiput(0,0)(0,40){2}{\multiput(0,0)(40,0){2}{\sq}} 
  \end{picture} } 
\put(100,0){\setlength\unitlength{0.6pt}\begin{picture}(40,40)(0,0)
  \multiput(0,0)(0,40){4}{\multiput(0,0)(40,0){4}{\sq}} 
  \end{picture} } 
\end{picture}\end{center}
\caption{The triangular  grids used in Tables \ref{t1}--\ref{t4}. }
\label{f-2}
\end{figure}

\begin{figure}[H]
\begin{center}\setlength\unitlength{2.4pt}\centering 
 \begin{picture}(140,45)(0,0) \put(0,41){$G_1:$}  \put(50,41){$G_2:$} \put(100,41){$G_3:$} 
  
\def\sq{\begin{picture}(40,40)(0,0) \put(0,0){\line(1,3){10}}  \put(40,40){\line(-1,-3){10}} \put(10,30){\line(1,-1){20}}
  \multiput(0,0)(40,0){2}{\line(0,1){40}}\multiput(0,0)(0,40){2}{\line(1,0){40}} \end{picture} }
  
\put(0,0){\begin{picture}(40,40)(0,0)
  \multiput(0,0)(0,40){1}{\multiput(0,0)(40,0){1}{\sq}} 
  \end{picture} }
  
\put(50,0){\setlength\unitlength{1.2pt}\begin{picture}(40,40)(0,0)
  \multiput(0,0)(0,40){2}{\multiput(0,0)(40,0){2}{\sq}} 
  \end{picture} } 

\put(100,0){\setlength\unitlength{0.6pt}\begin{picture}(40,40)(0,0)
  \multiput(0,0)(0,40){4}{\multiput(0,0)(40,0){4}{\sq}} 
  \end{picture} } 
\end{picture}\end{center}
\caption{The non-convex polygonal grids used in Tables \ref{t1}--\ref{t4}. }
\label{f-3}
\end{figure}

\begin{table}[ht]
  \centering  \renewcommand{\arraystretch}{1.1}
  \caption{Error profile by the $P_2$ WG element for computing \eqref{s1}. }
  \label{t1}
\begin{tabular}{c|cc|cc|cc}
\hline
 $G_i$ &\shortstack{   $\|Q_h u-u_h\|_{0}$ \\ $\|Q_h w-w_h\|_{0}$ } & $O(h^r)$ 
      &\shortstack{  $\|\Delta_w (Q_h u-u_h)\|_{0} $ \\ $\|\Delta_w (Q_h w-w_h)\|_{0} $}& $O(h^r)$  
    & $\3bar E_h\3bar $& $O(h^r)$ \\ \hline
    &  \multicolumn{6}{c}{On square meshes (Figure \ref{f-1})}    \\
\hline  
 3&    0.770E-02 &  2.9&    0.187E+01 &  1.0&    0.116E+02 &  1.1 \\
 4&    0.995E-03 &  3.0&    0.913E+00 &  1.0&    0.566E+01 &  1.0 \\
 5&    0.175E-03 &  2.5&    0.448E+00 &  1.0&    0.279E+01 &  1.0 \\
\cline{2-5} 
 3&    0.421E-01 &  2.9&    0.106E+02 &  1.1 \\
 4&    0.548E-02 &  2.9&    0.517E+01 &  1.0 \\
 5&    0.102E-02 &  2.4&    0.255E+01 &  1.0 \\
\hline  
 &  \multicolumn{6}{c}{On triangular meshes (Figure \ref{f-2}) }    \\
\hline  
 3&    0.236E-02 &  2.7&    0.134E+00 &  1.0&    0.213E+01 &  0.9 \\
 4&    0.384E-03 &  2.6&    0.676E-01 &  1.0&    0.108E+01 &  1.0 \\
 5&    0.867E-04 &  2.1&    0.339E-01 &  1.0&    0.543E+00 &  1.0 \\
\cline{2-5} 
 3&    0.142E-01 &  2.6&    0.126E+01 &  0.9 \\
 4&    0.281E-02 &  2.3&    0.639E+00 &  1.0 \\
 5&    0.701E-03 &  2.0&    0.320E+00 &  1.0 \\
\hline  
    &  \multicolumn{6}{c}{ On non-convex polygonal meshes (Figure \ref{f-3}) }    \\  
\hline   
 3&    0.582E-02 &  2.9&    0.105E+01 &  0.9&    0.851E+01 &  1.0 \\
 4&    0.774E-03 &  2.9&    0.536E+00 &  1.0&    0.426E+01 &  1.0 \\
 5&    0.130E-03 &  2.6&    0.269E+00 &  1.0&    0.213E+01 &  1.0 \\
\cline{2-5} 
 3&    0.327E-01 &  2.8&    0.608E+01 &  1.0\\
 4&    0.519E-02 &  2.7&    0.305E+01 &  1.0\\
 5&    0.866E-03 &  2.6&    0.153E+01 &  1.0\\
\hline 
    \end{tabular}%
\end{table}%

From Tables \ref{t1}–-\ref{t3}, the results match expectations. The differences between the various meshes are small. In Table \ref{t3}, at the fourth-level grid, machine accuracy is reached due to the large condition numbers of the biharmonic Cauchy problems.

\begin{table}[ht]
  \centering  \renewcommand{\arraystretch}{1.1}
  \caption{Error profile by the $P_3$ WG element for computing \eqref{s1}. }
  \label{t2}
\begin{tabular}{c|cc|cc|cc}
\hline
 $G_i$ &\shortstack{   $\|Q_h u-u_h\|_{0}$ \\ $\|Q_h w-w_h\|_{0}$ } & $O(h^r)$ 
      &\shortstack{  $\|\Delta_w (Q_h u-u_h)\|_{0} $ \\ $\|\Delta_w (Q_h w-w_h)\|_{0} $}& $O(h^r)$  
    & $\3bar E_h\3bar $& $O(h^r)$ \\ \hline
    &  \multicolumn{6}{c}{On square meshes (Figure \ref{f-1})}    \\
\hline  
 3&    0.110E-02 &  4.0&    0.962E+00 &  2.0&    0.275E+01 &  2.0 \\
 4&    0.667E-04 &  4.0&    0.234E+00 &  2.0&    0.686E+00 &  2.0 \\
 5&    0.415E-05 &  4.0&    0.576E-01 &  2.0&    0.171E+00 &  2.0 \\
\cline{2-5} 
 3&    0.254E-02 &  4.0&    0.254E+01 &  2.0\\
 4&    0.161E-03 &  4.0&    0.635E+00 &  2.0\\
 5&    0.101E-04 &  4.0&    0.159E+00 &  2.0\\
\hline  
 &  \multicolumn{6}{c}{On triangular meshes (Figure \ref{f-2}) }    \\
\hline  
 3&    0.165E-03 &  3.4&    0.271E-01 &  1.9&    0.325E+00 &  2.0 \\
 4&    0.159E-04 &  3.4&    0.690E-02 &  2.0&    0.820E-01 &  2.0 \\
 5&    0.115E-05 &  3.8&    0.173E-02 &  2.0&    0.206E-01 &  2.0 \\
\cline{2-5} 
 3&    0.141E-02 &  3.0&    0.266E+00 &  2.0\\
 4&    0.120E-03 &  3.6&    0.665E-01 &  2.0\\
 5&    0.832E-05 &  3.9&    0.166E-01 &  2.0\\
\hline  
    &  \multicolumn{6}{c}{ On non-convex polygonal meshes (Figure \ref{f-3}) }    \\  
\hline   
 3&    0.802E-03 &  3.9&    0.665E+00 &  2.0&    0.183E+01 &  2.0 \\
 4&    0.522E-04 &  3.9&    0.168E+00 &  2.0&    0.459E+00 &  2.0 \\
 5&    0.345E-05 &  3.9&    0.420E-01 &  2.0&    0.115E+00 &  2.0 \\
\cline{2-5} 
 3&    0.181E-02 &  3.9&    0.159E+01 &  2.0\\
 4&    0.129E-03 &  3.8&    0.398E+00 &  2.0\\
 5&    0.924E-05 &  3.8&    0.995E-01 &  2.0\\
\hline 
    \end{tabular}%
\end{table}%

\begin{table}[ht]
  \centering  \renewcommand{\arraystretch}{1.1}
  \caption{Error profile by the $P_4$ WG element for computing \eqref{s1}. }
  \label{t3}
\begin{tabular}{c|cc|cc|cc}
\hline
 $G_i$ &\shortstack{   $\|Q_h u-u_h\|_{0}$ \\ $\|Q_h w-w_h\|_{0}$ } & $O(h^r)$ 
      &\shortstack{  $\|\Delta_w (Q_h u-u_h)\|_{0} $ \\ $\|\Delta_w (Q_h w-w_h)\|_{0} $}& $O(h^r)$  
    & $\3bar E_h\3bar $& $O(h^r)$ \\ \hline
    &  \multicolumn{6}{c}{On square meshes (Figure \ref{f-1})}    \\
\hline  
 2&    0.281E-02 &  5.1&    0.170E+01 &  3.1&    0.171E+01 &  3.1 \\
 3&    0.888E-04 &  5.0&    0.206E+00 &  3.0&    0.207E+00 &  3.0 \\
 4&    0.284E-05 &  5.0&    0.252E-01 &  3.0&    0.254E-01 &  3.0 \\
\cline{2-5} 
 2&    0.506E-05 &  2.9&    0.365E-04 &  3.6\\
 3&    0.366E-07 &  7.1&    0.586E-06 &  6.0\\
 4&    0.995E-06 &  0.0&    0.168E-05 &  0.0\\
\hline  
 &  \multicolumn{6}{c}{On triangular meshes (Figure \ref{f-2}) }    \\
\hline  
 2&    0.151E-03 &  4.0&    0.369E-01 &  2.5&    0.413E-01 &  2.5 \\
 3&    0.884E-05 &  4.1&    0.488E-02 &  2.9&    0.551E-02 &  2.9 \\
 4&    0.351E-06 &  4.7&    0.618E-03 &  3.0&    0.700E-03 &  3.0 \\
\cline{2-5} 
 2&    0.900E-06 &  5.9&    0.111E-04 &  4.6\\
 3&    0.293E-07 &  4.9&    0.218E-06 &  5.7\\
 4&    0.333E-08 &  3.1&    0.645E-08 &  5.1\\
\hline  
    &  \multicolumn{6}{c}{ On non-convex polygonal meshes (Figure \ref{f-3}) }    \\  
\hline   
 2&    0.207E-02 &  4.9&    0.151E+01 &  2.9&    0.153E+01 &  2.9 \\
 3&    0.676E-04 &  4.9&    0.190E+00 &  3.0&    0.193E+00 &  3.0 \\
 4&    0.212E-05 &  5.0&    0.238E-01 &  3.0&    0.242E-01 &  3.0 \\
\cline{2-5} 
 2&    0.540E-05 &  6.3&    0.252E-04 &  4.3\\
 3&    0.914E-07 &  5.9&    0.509E-06 &  5.6\\
 4&    0.120E-06 &  0.0&    0.521E-06 &  0.0\\
\hline 
    \end{tabular}%
\end{table}%

In Figure \ref{f-s2}, the solutions \(u\) and \(w\) from \eqref{s2} are plotted. The free boundary is at \(x = 1.0\) at the front of the figure. We can see that \(u\) and \(w\) vary significantly near the front free boundary, especially \(w\). Thus, compared with Tables \ref{t1}--\ref{t3}, the results for solution \eqref{s2} are worse in Tables \ref{t4}--\ref{t6}. An exception occurs in Table \ref{t6} for the \(P_{4}\) weak Galerkin finite element. The reason might be that the exact solution \(w\) is inside the finite element space \(W_{h}\), while \(u_{h}\) achieves partial superconvergence. 
  
\begin{figure}[ht]
 \begin{center}\setlength\unitlength{1.0pt}
\begin{picture}(340,285)(0,0)  
  \put(0,140){\includegraphics[width=340pt]{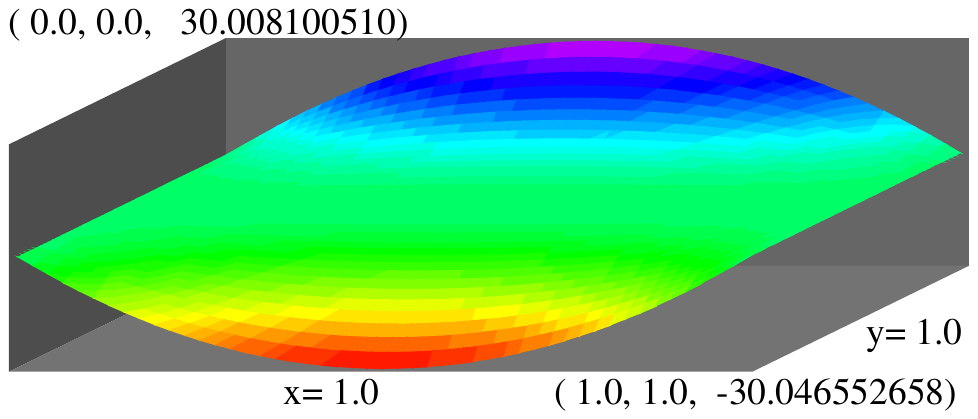}}   
  \put(0,-10){\includegraphics[width=340pt]{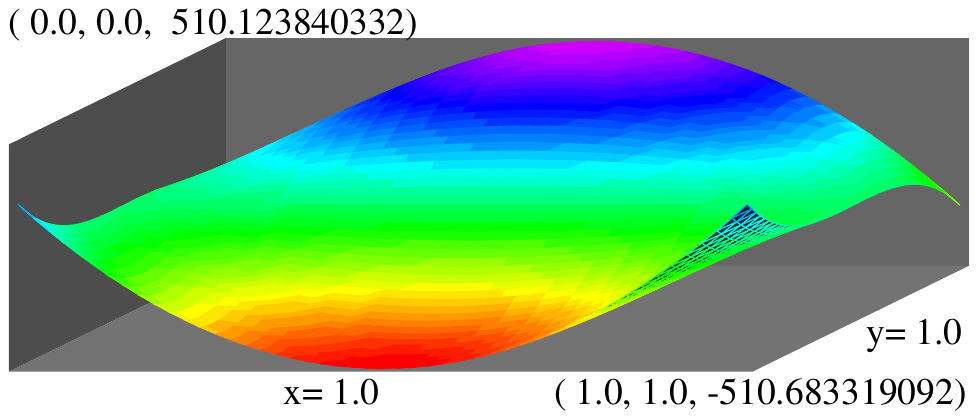}}   
 \end{picture}\end{center}
\caption{The solutions $u$ (top) and $w$ (bottom) for \eqref{s2}, where is free boundary 
   $\partial\Omega \setminus =\{x=1.0\}$. }\label{f-s2}
\end{figure}

\begin{table}[ht]
  \centering  \renewcommand{\arraystretch}{1.1}
  \caption{Error profile by the $P_2$ WG element for computing \eqref{s2}. }
  \label{t4}
\begin{tabular}{c|cc|cc|cc}
\hline
 $G_i$ &\shortstack{   $\|Q_h u-u_h\|_{0}$ \\ $\|Q_h w-w_h\|_{0}$ } & $O(h^r)$ 
      &\shortstack{  $\|\Delta_w (Q_h u-u_h)\|_{0} $ \\ $\|\Delta_w (Q_h w-w_h)\|_{0} $}& $O(h^r)$  
    & $\3bar E_h\3bar $& $O(h^r)$ \\ \hline
    &  \multicolumn{6}{c}{On square meshes (Figure \ref{f-1})}    \\
\hline  
 3&    0.936E+00 &  2.4&    0.203E+03 &  0.8&    0.272E+04 &  1.0 \\
 4&    0.155E+00 &  2.6&    0.105E+03 &  0.9&    0.136E+04 &  1.0 \\
 5&    0.229E-01 &  2.8&    0.532E+02 &  1.0&    0.681E+03 &  1.0 \\
\cline{2-5} 
 3&    0.115E+02 &  3.0&    0.245E+04 &  1.0\\
 4&    0.144E+01 &  3.0&    0.122E+04 &  1.0\\
 5&    0.181E+00 &  3.0&    0.612E+03 &  1.0\\
\hline  
 &  \multicolumn{6}{c}{On triangular meshes (Figure \ref{f-2}) }    \\
\hline  
 3&    0.709E+00 &  2.1&    0.533E+02 &  0.9&    0.117E+04 &  1.0 \\
 4&    0.135E+00 &  2.4&    0.273E+02 &  1.0&    0.583E+03 &  1.0 \\
 5&    0.224E-01 &  2.6&    0.138E+02 &  1.0&    0.291E+03 &  1.0 \\
\cline{2-5} 
 3&    0.824E+01 &  2.4&    0.792E+03 &  1.0\\
 4&    0.126E+01 &  2.7&    0.396E+03 &  1.0\\
 5&    0.296E+00 &  2.1&    0.198E+03 &  1.0\\
\hline  
    &  \multicolumn{6}{c}{ On non-convex polygonal meshes (Figure \ref{f-3}) }    \\  
\hline   
 3&    0.810E+00 &  2.4&    0.840E+02 &  0.9&    0.249E+04 &  1.0 \\
 4&    0.144E+00 &  2.5&    0.435E+02 &  0.9&    0.124E+04 &  1.0 \\
 5&    0.226E-01 &  2.7&    0.220E+02 &  1.0&    0.621E+03 &  1.0 \\
\cline{2-5} 
 3&    0.106E+02 &  2.9&    0.203E+04 &  1.0\\
 4&    0.141E+01 &  2.9&    0.102E+04 &  1.0\\
 5&    0.186E+00 &  2.9&    0.508E+03 &  1.0\\
\hline 
    \end{tabular}%
\end{table}%

\begin{table}[ht]
  \centering  \renewcommand{\arraystretch}{1.1}
  \caption{Error profile by the $P_3$ WG element for computing \eqref{s2}. }
  \label{t5}
\begin{tabular}{c|cc|cc|cc}
\hline
 $G_i$ &\shortstack{   $\|Q_h u-u_h\|_{0}$ \\ $\|Q_h w-w_h\|_{0}$ } & $O(h^r)$ 
      &\shortstack{  $\|\Delta_w (Q_h u-u_h)\|_{0} $ \\ $\|\Delta_w (Q_h w-w_h)\|_{0} $}& $O(h^r)$  
    & $\3bar E_h\3bar $& $O(h^r)$ \\ \hline
    &  \multicolumn{6}{c}{On square meshes (Figure \ref{f-1})}    \\
\hline  
 3&    0.211E+00 &  3.9&    0.194E+03 &  1.9&    0.197E+03 &  1.9 \\
 4&    0.145E-01 &  3.9&    0.492E+02 &  2.0&    0.500E+02 &  2.0 \\
 5&    0.883E-03 &  4.0&    0.123E+02 &  2.0&    0.125E+02 &  2.0 \\
\cline{2-5} 
 3&    0.340E-03 &  4.9&    0.401E-02 &  4.5\\
 4&    0.328E-04 &  3.4&    0.193E-03 &  4.4\\
 5&    0.162E-04 &  1.0&    0.432E-04 &  2.2\\
\hline  
 &  \multicolumn{6}{c}{On triangular meshes (Figure \ref{f-2}) }    \\
\hline  
 3&    0.115E+00 &  2.7&    0.193E+02 &  1.9&    0.239E+02 &  1.9 \\
 4&    0.101E-01 &  3.5&    0.491E+01 &  2.0&    0.616E+01 &  2.0 \\
 5&    0.756E-03 &  3.7&    0.123E+01 &  2.0&    0.156E+01 &  2.0 \\
\cline{2-5} 
 3&    0.880E-03 &  3.7&    0.916E-02 &  2.7\\
 4&    0.666E-04 &  3.7&    0.386E-03 &  4.6\\
 5&    0.162E-05 &  5.4&    0.107E-04 &  5.2\\
\hline  
    &  \multicolumn{6}{c}{ On non-convex polygonal meshes (Figure \ref{f-3}) }    \\  
\hline   
 3&    0.145E+00 &  3.8&    0.117E+03 &  1.9&    0.125E+03 &  1.9 \\
 4&    0.109E-01 &  3.7&    0.297E+02 &  2.0&    0.318E+02 &  2.0 \\
 5&    0.916E-03 &  3.6&    0.746E+01 &  2.0&    0.799E+01 &  2.0 \\
\cline{2-5} 
 3&    0.305E-03 &  6.1&    0.269E-02 &  4.9\\
 4&    0.525E-04 &  2.5&    0.243E-03 &  3.5\\
 5&    0.104E-03 &  0.0&    0.321E-03 &  0.0\\
\hline 
    \end{tabular}%
\end{table}%

\begin{table}[ht]
  \centering  \renewcommand{\arraystretch}{1.1}
  \caption{Error profile by the $P_4$ WG element for computing \eqref{s2}. }
  \label{t6}
\begin{tabular}{c|cc|cc|cc}
\hline
 $G_i$ &\shortstack{   $\|Q_h u-u_h\|_{0}$ \\ $\|Q_h w-w_h\|_{0}$ } & $O(h^r)$ 
      &\shortstack{  $\|\Delta_w (Q_h u-u_h)\|_{0} $ \\ $\|\Delta_w (Q_h w-w_h)\|_{0} $}& $O(h^r)$  
    & $\3bar E_h\3bar $& $O(h^r)$ \\ \hline
    &  \multicolumn{6}{c}{On square meshes (Figure \ref{f-1})}    \\
\hline  
 2&    0.281E-02 &  5.1&    0.170E+01 &  3.1&    0.171E+01 &  3.1 \\
 3&    0.888E-04 &  5.0&    0.206E+00 &  3.0&    0.207E+00 &  3.0 \\
 4&    0.284E-05 &  5.0&    0.252E-01 &  3.0&    0.254E-01 &  3.0 \\
\cline{2-5} 
 2&    0.506E-05 &  2.9&    0.365E-04 &  3.6\\
 3&    0.366E-07 &  7.1&    0.586E-06 &  6.0\\
 4&    0.995E-06 &  0.0&    0.168E-05 &  0.0\\
\hline  
 &  \multicolumn{6}{c}{On triangular meshes (Figure \ref{f-2}) }    \\
\hline  
 2&    0.134E+00 &  4.1&    0.247E+02 &  3.0&    0.274E+02 &  3.0 \\
 3&    0.720E-02 &  4.2&    0.308E+01 &  3.0&    0.344E+01 &  3.0 \\
 4&    0.790E-03 &  3.2&    0.385E+00 &  3.0&    0.431E+00 &  3.0 \\
\cline{2-5} 
 2&    0.112E-02 &  5.6&    0.888E-02 &  5.0\\
 3&    0.808E-04 &  3.8&    0.277E-03 &  5.0\\
 4&    0.496E-05 &  4.0&    0.610E-05 &  5.5\\
\hline  
    &  \multicolumn{6}{c}{ On non-convex polygonal meshes (Figure \ref{f-3}) }    \\  
\hline   
 2&    0.610E+00 &  5.0&    0.458E+03 &  3.0&    0.464E+03 &  3.0 \\
 3&    0.194E-01 &  5.0&    0.573E+02 &  3.0&    0.580E+02 &  3.0 \\
 4&    0.608E-03 &  5.0&    0.716E+01 &  3.0&    0.725E+01 &  3.0 \\
\cline{2-5} 
 2&    0.958E-03 &  6.6&    0.659E-02 &  4.9\\
 3&    0.883E-05 &  6.8&    0.954E-04 &  6.1\\
 4&    0.715E-03 &  0.0&    0.749E-03 &  0.0\\
\hline 
    \end{tabular}%
\end{table}%

\end{document}